\documentclass[11pt]{amsart}
\usepackage{amsmath,amsfonts,amssymb,amsthm,a4} 
\usepackage{mathrsfs,enumerate}
\usepackage{mathpazo}
\usepackage{color}
\usepackage[dvipsnames]{xcolor}

\numberwithin{equation}{section}
\newcommand{\R}{\mathbb{R}}

\newcommand{\C}{\mathbb{C}}

\newcommand{\e}{\varepsilon}
\newcommand{\eps}{\varepsilon}

\newcommand{\ol}{\overline}

\newtheorem{thm}{Theorem}[section] 
\theoremstyle{definition}
\newtheorem{lem}[thm]{Lemma}

\author{Matthias Kurzke}
\address{School of Mathematical Sciences, University of Nottingham, Nottingham NG7~2RD, United Kingdom}
\email{matthias.kurzke@nottingham.ac.uk}
\date{19 October 2018}
\title[Correlation coefficient]
{The correlation coefficient for vortices in the plane}

\keywords{point vortices, Kirchhoff-Onsager functional}

\begin{document}
\begin{abstract}
For point vortices in the plane, we consider the correlation coefficient of Ovchinnikov and Sigal.
Generalising a result by Esposito, we show that it vanishes for all vortex equilibria.
 \end{abstract}
\maketitle 


\section{Introduction}
The interaction of $N$ point vortices in the plane is governed by the Kirchhoff-Onsager functional
\begin{equation}\label{eq:Wis}
W(a_1,d_1,\dots, a_N, d_N)=\sum_{j\neq k} d_j d_k\log\frac{1}{|a_j-a_k|} 
\end{equation}
where $a_j\in \R^2$ are the (pairwise distinct) vortex positions and $d_j\in\mathbb{R}\setminus\{0\}$ the corresponding circulations. 

The function $W$ appears in many contexts, for example it also describes the Coulombian interaction energy between $2D$ 
charges (or charged parallel $3D$ lines). Our interest in it comes from the fact that it 
governs the interaction of Ginzburg-Landau vortices in the plane (in that case 
we have $d_j\in\mathbb{Z}$ corresponding to the topological degree of the vortex at $a_j$), and is known as the
``renormalized energy'' of Bethuel, Brezis and H\'elein \cite{BBH}. Its gradient 
$\nabla W=(\frac{\partial W}{\partial a_1},\dots, \frac{\partial W}{a_N})\in\R^{2N}$ 
 has been shown to govern the dynamics of related
time-dependent problems.

One of the main open problems in the
theory of Ginzburg-Landau vortices, raised by Brezis, Merle and Rivi\`ere \cite{BMR}, is to classify the solutions of
\[
-\Delta u = u (1-|u|^2)
\]
where $u:\R^2\to \C$ is such that $|u(x)|\to 1$ as $|x|\to \infty$ and $\operatorname{deg}(\frac{u}{|u|};\partial B_R)=n\in \mathbb{Z}$ for 
$R$ large. It is known that solutions of the form $u(re^{i\theta})=f_n(r) e^{in\theta}$ exist, corresponding to a single vortex of degree $n$
at the origin,
 but it is open whether these solutions
are unique up to translations and rotations. Ovchinnikov and Sigal \cite{OvSi} devised a program to construct non-radial solutions, starting 
from a configuration $(a_j, d_j)$ with $\nabla W=0$ and such that the \emph{correlation coefficient} 
\begin{equation}\label{eq:Ais}
A(a_1,d_1,\dots, a_N, d_N):=\int_{\R^2 } \left(\left|\sum_{j=1}^N \frac{d_j}{z-a_j}\right|^4-\sum_{j=1}^N \frac{d_j^4}{|z-a_j|^4} 
 \right)dxdy
\end{equation}
is positive. Here $z=x+iy\in\C\simeq\R^2$. 
Using some asymptotics and scaling arguments, Ovchinnikov and Sigal deduced the existence of  a critical value $s>0$ such that 
solutions with vortices near
$(sa_1,\dots, sa_N)$ can be expected to exist. These solutions would break the radial symmetry and answer the question of 
Brezis, Merle and Rivi\`ere.

Closer inspection of the definition \eqref{eq:Ais}
shows the function being integrated is not in $L^1$ near the points $a_\ell$, so the integral must be 
understood as a principal value $A=\lim_{\eps\to 0}A_\eps$ with $A_\eps$ defined as 
\[
A_\eps(a_1,d_1,\dots, a_N, d_N)=\int_{\R^2\setminus \bigcup_{\ell=1}^N B_\eps(a_\ell)} \Biggl(\Bigl|\sum_{j=1}^N \frac{d_j}{z-a_j}\Bigr|^4-\sum_{j=1}^N \frac{d_j^4}{|z-a_j|^4} 
 \Biggr)dxdy.
\]
For the vortex equilibria considered by Ovchinnikov and Sigal,
Esposito \cite{Espo} later  showed that the correlation coefficient vanishes.
These equilibria 
have explicitly known vortex positions and are invariant under a finite group of symmetries. 
The calculations in \cite{OvSi} and \cite{Espo} make explicit use of this symmetry. 
However, many other vortex equilibria exist, and a large number of them
have no obvious symmetry whatsoever. For examples,  solutions where $d_i=\pm 1$ can be found by a polynomial method
as zeroes of successive Adler-Moser polynomials, compare \cite{ArefVortexCrystals,  ClarksonVortices, AdlerMoser}. 
Related polynomial methods have also been used recently in the construction of solutions of the Gross-Pitaevskii equation  \cite{LiuWei}.

Given the use of symmetry in the calculation of the correlation coefficient by Esposito,
 it is  natural to consider whether the vanishing of the correlation coefficient is related to the symmetry of the equilibria.
In the present note, we consider the correlation coefficient for general 
real-valued nonzero weights $d_j$. Our result (Theorem~\ref{thm:t}) is that $A=0$ for every configuration of vortices with $\nabla W=0$,
thus generalising Esposito's result to \emph{all} possible equilibrium configurations. Instead of relying on symmetry, our proof 
uses a decomposition of the integrands that significantly simplifies the computation.

\section{Computing the correlation coefficient}

We start with an algebraic observation about equilibria. It appears for example in  \cite{ONeilSta} or 
\cite{BethuelOrlandiSmetsCMP}. The argument using residues can be found in \cite{CoxON}.
\begin{lem}\label{lem:21}
The gradient of $W$ is given by  $\nabla W=-(\ol{ f_1},\dots, \ol{f_N})$, where
\[f_j=\sum_{k\neq j} \frac{d_j d_k}{a_j-a_k}.\]

The rational function $G$ given by  
\[
G(z)=\sum_{j\neq k} \frac{d_j d_k}{(z-a_j)(z-a_k)}
\]
has the partial fraction decomposition
\[
G(z)=\sum_{j=1}^N \frac{ f_j}{z-a_j}.
\]
In particular, $G$ is identically zero if and only if the configuration of vortices is in equilibrium.
\end{lem}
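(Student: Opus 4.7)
The plan is to prove the three assertions of the lemma by direct calculation, all of them elementary. \textbf{For the gradient formula}, I would identify each $a_\ell \in \mathbb{R}^2$ with the complex number $a_\ell = x_\ell + iy_\ell$, so that the output of $\nabla_{a_\ell}$ can also be read as an element of $\mathbb{C}$. A short computation in real coordinates (or, equivalently, via the Wirtinger formalism applied to $\log|z|^2 = \log z + \log\bar z$) gives
\[
\nabla_{a_\ell}\log|a_\ell - a_k| \;=\; \frac{a_\ell - a_k}{|a_\ell-a_k|^2} \;=\; \frac{1}{\overline{a_\ell - a_k}}.
\]
Under the $\sum_{j\neq k}$ convention each unordered pair $\{j,k\}$ appears twice in the sum defining $W$, once with $j=\ell$ and once with $k=\ell$, and the two resulting contributions to $\nabla_{a_\ell} W$ are symmetric; collecting them and incorporating the minus sign from $\log(1/|\cdot|)$ produces the claimed expression $-\overline{f_\ell}$.

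\textbf{For the partial fraction expansion}, the starting point is the elementary identity
\[
\frac{1}{(z-a_j)(z-a_k)} \;=\; \frac{1}{a_j-a_k}\Bigl(\frac{1}{z-a_j} - \frac{1}{z-a_k}\Bigr),\qquad j\neq k,
\]
which is verified at once by clearing denominators. I would insert this into the definition of $G$ and split the difference into two sums; swapping the labels $j\leftrightarrow k$ in the second sum shows it coincides with the first (the antisymmetry of $(a_j-a_k)^{-1}$ cancels the relative minus sign), and the combined sum reorganises as $\sum_j (z-a_j)^{-1}\sum_{k\neq j} \frac{d_jd_k}{a_j-a_k} = \sum_j \frac{f_j}{z-a_j}$.

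\textbf{For the equivalence with equilibrium}, the previous step identifies the poles of $G$ as simple poles at the $a_j$ with residues $f_j$, while the original defining sum shows $G(z) = O(|z|^{-2})$ as $|z|\to\infty$. Hence the vanishing of every $f_j$ is equivalent to $G$ being a pole-free rational function that decays at infinity, i.e.\ to $G\equiv 0$; combined with the first step, this says exactly $\nabla W = 0 \Leftrightarrow G\equiv 0$. The whole argument is routine and I do not anticipate a substantive obstacle; the only places needing modest care are the symmetry bookkeeping for the $\sum_{j\neq k}$ convention in the first two steps, and the brief decay-at-infinity argument that turns ``all residues vanish'' into ``$G\equiv 0$'' in the third.
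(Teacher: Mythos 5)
Your proof is correct, and for the gradient formula it coincides with the paper's: the paper computes $\nabla_a\log\frac{1}{|a-b|}=-\frac{1}{\overline{a-b}}$ and sums, exactly as you do. For the partial fraction decomposition you take a mildly different route: the paper simply notes that $G$ has only simple poles and reads off the residue at $a_j$ as $f_j$, implicitly invoking the fact that a rational function with only simple poles that vanishes at infinity equals the sum of its principal parts; you instead derive the decomposition directly from the telescoping identity $\frac{1}{(z-a_j)(z-a_k)}=\frac{1}{a_j-a_k}\bigl(\frac{1}{z-a_j}-\frac{1}{z-a_k}\bigr)$ and an index swap. Your version is more self-contained (no complex analysis needed), and it makes your third step nearly automatic: once $G=\sum_j f_j/(z-a_j)$ is in hand, $G\equiv 0$ iff all $f_j=0$ follows from the linear independence of the functions $1/(z-a_j)$, so your decay-at-infinity argument is really only required in the paper's residue-based formulation. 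One bookkeeping remark: under the ordered-pair reading of $\sum_{j\neq k}$ that you explicitly adopt (each unordered pair counted twice), your own accounting actually yields $\nabla_{a_\ell}W=-2\overline{f_\ell}$ and $G=\sum_j 2f_j/(z-a_j)$ rather than the stated expressions; the paper is equally silent about this factor, and since it appears consistently in both formulas it is immaterial to the equivalence $\nabla W=0$ iff $G\equiv 0$ and to everything downstream, but it is worth stating which convention you are using.
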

\begin{proof}
We compute 
\[\nabla_a \log\frac1{| a-b|}= -\frac12\nabla_a \log|a-b|^2 = -\frac{a-b}{|a-b|^2}
 = -\frac 1{\ol {a - b}},\]
so the claim about the gradient follows by summation. 

The function $G$ clearly only has single poles, and the residue at a potential pole $a_j$ is $\sum_{k\neq j}\frac{d_jd_k}{a_j-a_k}=f_j$.
\end{proof}
\begin{lem}\label{lem:22}
For any $p,q\in \R^2=\C$ and any $\eps<\frac12|p-q|$, we have
\[
\int_{\R^2\setminus \left(B_{\eps}(p)\cup B_\eps(q)\right) } \frac1{(\ol{z-p})^2} \frac1{(z-q)^2} dx dy = 0.
\]
\end{lem}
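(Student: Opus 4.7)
The plan is to exhibit the integrand as a $\de_{\ol z}$-derivative and then reduce the area integral to boundary terms via Stokes' theorem. The key observation is that for
\[
F(z,\ol z) := -\frac{1}{(\ol{z-p})(z-q)^2}
\]
one has, using $\de_{\ol z}(\ol{z-p})^{-1}=-(\ol{z-p})^{-2}$ and the holomorphy of $(z-q)^{-2}$, the identity $\de_{\ol z}F = \frac{1}{(\ol{z-p})^2(z-q)^2}$, which is exactly the integrand. I would truncate to the bounded region $\Omega_R := B_R(0)\setminus(\ol{B_\eps(p)}\cup\ol{B_\eps(q)})$ and apply the complex form of Stokes' theorem,
\[
\int_{\Omega_R}\de_{\ol z}F\,dx\,dy = \frac{1}{2i}\oint_{\de\Omega_R}F\,dz,
\]
with the induced orientations: the outer circle $|z|=R$ counterclockwise and the two small circles $|z-p|=\eps$, $|z-q|=\eps$ clockwise. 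Since $|F|=O(|z|^{-3})$ at infinity, the contribution from $|z|=R$ is $O(R^{-2})$ and disappears as $R\to\infty$.

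It remains to evaluate the two small-circle integrals and show each is zero. For the circle around $p$, I would parametrize $z = p+\eps e^{i\theta}$ and use $\ol{z-p}=\eps e^{-i\theta}$ and $dz = i\eps e^{i\theta}\,d\theta$ to reduce the boundary integral to a constant multiple of
\[
\int_0^{2\pi}\frac{e^{2i\theta}}{(p-q+\eps e^{i\theta})^2}\,d\theta.
\]
The hypothesis $\eps<\frac12|p-q|$ permits a convergent expansion of $(p-q+\eps e^{i\theta})^{-2}$ as a power series in $\eps e^{i\theta}/(p-q)$; after multiplication by $e^{2i\theta}$, every term is a pure exponential $e^{ik\theta}$ with $k\ge 2$, integrating to zero over $[0,2\pi]$. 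The circle around $q$ is handled symmetrically, with the roles of the holomorphic and antiholomorphic factors interchanged; the resulting expansion contains only Fourier modes $e^{ik\theta}$ with $k\le -1$, so that integral also vanishes.

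The main obstacle, such as it is, is bookkeeping: tracking the correct signs and orientations in the Stokes step, and verifying that neither local Fourier expansion contains a zero-frequency term. The absence of such a term is an automatic consequence of the asymmetry between the factors $(\ol{z-p})^{-2}$ and $(z-q)^{-2}$, and is precisely the mechanism that makes the principal-value integral well defined and equal to zero.
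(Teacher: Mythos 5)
Your proof is correct, but it takes a genuinely different route from the paper. The paper reduces by homothety to $p=0$, $q=1$, constructs a M\"obius transformation carrying an annulus $B_{R_2}(0)\setminus\ol{B_{R_1}(0)}$ onto $\R^2\setminus(B_\eps(0)\cup B_\eps(1))$, and then integrates in polar coordinates, killing the angular integral with the residue theorem after checking that no poles fall inside the unit circle. You instead observe that the integrand is exactly $\de_{\ol z}F$ for the explicit potential $F=-(\ol{z-p})^{-1}(z-q)^{-2}$, apply Stokes on $B_R(0)\setminus(\ol{B_\eps(p)}\cup\ol{B_\eps(q)})$, discard the outer circle by the $O(|z|^{-3})$ decay of $F$, and kill each small-circle integral by expanding in a geometric series (convergent since $\eps<|p-q|$) and noting that only nonzero Fourier modes appear: frequencies $\ge 2$ at $p$ and $\le -1$ at $q$. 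All the computational details check out, including the identity $\de_{\ol z}(\ol{z-p})^{-1}=-(\ol{z-p})^{-2}$, the normalisation $\int_\Omega \de_{\ol z}F\,dx\,dy=\frac1{2i}\oint_{\de\Omega}F\,dz$, and the mode counts on the two circles (the $1/\eps$ prefactor on the $q$-circle is harmless since each mode integrates to exactly zero). Your argument is arguably more transparent about the mechanism --- the integrand is an exact form whose potential decays and has vanishing periods on the excised circles --- and it generalises immediately to integrands of the form $(\ol{z-p})^{-m}(z-q)^{-n}$ with $m,n\ge2$, whereas the paper's change of variables is tailored to the specific two-disk geometry; on the other hand, the paper's method avoids any discussion of orientations and improper integrals at infinity by mapping the whole unbounded domain to a bounded annulus in one step.
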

\begin{proof}
As in \cite{Espo}, the idea is to compute the integral in polar coordinates. In \cite{Espo}, this is done for a domain 
where several $\eps$-disks have been removed from the plane, using 
auxiliary functions that allow small annular regions instead of small disks to be removed without affecting the limit. 
In our domain of integration, only two disks have been removed, so we can instead use change of variables with a M\"obius
transformation
to make the entire domain radially symmetric without introducing auxiliary functions.

By homothety, it suffices to consider $p=0$ and $q=1$. 
We look for a M\"obius transformation $T$ mapping $B_{R_2}(0)\setminus \ol{B_{R_1}(0)}$ to $\R^2 \setminus (B_\eps(0)\cup B_\eps(1))$. We can find its inverse by using the ansatz $S(z)=\frac{z-a}{b-z}$ with $R_1=S(\eps)=-S(-\eps)$, $R_2=S(1-\eps)=-S(1+\eps)$. In particular, $S$
maps $\partial B_\e(0)$ onto $B_{R_1}(0)$, $\partial B_{\e}(1)$ onto $\partial{B_{R_2(0)}}$, and the interval $(\eps, 1-\eps)$ onto the interval $(R_1,R_2)$. 
This leads to $ab=\eps^2$ and $a+b=1$, 
so $a=\frac12(1-\sqrt{1-4\e^2})$ and $b=\frac12(1+\sqrt{1-4\e^2})$. Note that for $\eps\to 0$, $a=\e^2+O(\e^4)$ and $b= 1-\e^2+O(\e^4)$. 
We have $R_1=S(\e)=\frac{\e-a}{b-\e} =O(\e)$ and $R_2=S(1-\e)=\frac1{R_1}=O(\frac1\e)$. 
The inverse of $S$ is given by $T(w)=\frac{bw+a}{1+w}$. Clearly $T'(w)=\frac{b(w+1)-(bw+a)}{(w-1)^2}=\frac{b-a}{(w+1)^2}$.

Changing variables to $w=u+iv$ and using $dxdy=|T'(w)|^2 dudv$, 
\begin{multline*}
I=\int_{\R^2\setminus \left(B_{\e}(0)\cup B_\e(1)\right) } \frac1{\ol{z}^2} \frac1{(z-1)^2} dx dy \\= 
\int_{B_{R_2}(0)\setminus \ol{B_{R_1}(0)}} \frac1{\left(\ol{\frac{wb+a}{w+1}}\right)^2} \frac1{(\frac{wb+a}{w+1}-1)^2} \frac{|b-a|^2}{|w+1|^4} dudv
\\= \int_{B_{R_2}(0)\setminus \ol{B_{R_1}(0)}}
\frac{|b-a|^2}{(\ol w b+a)^2(w(b-1)+a-1)^2}
dudv
\end{multline*}
For $|w|=r$, we write $w=r\zeta=re^{i\theta}$ and $\ol w =\frac r\zeta$. In polar coordinates and using $d\zeta=i\zeta d\theta$ as well 
as $a+b=1$, 
\[
I=\int_{R_1}^{R_2} \int_{\partial B_1} \frac{r \zeta|b-a|^2}{ i (rb+a\zeta)^2 (ra\zeta+b)^2} d\zeta dr
\]
Computing the $\zeta$-integral using the residue theorem, we note that the integrand has singularities for $|\zeta|<1$ 
if $\frac{rb}{a}<1$ or $\frac{b}{ra}<1$. However, from the monotonicity of $S$ on $(a,b)$ we infer $\frac{a}{b}=S(2ab)=S(2\e^2)<S(\e)=R_1<r<R_2<\frac ba$, so there are no singularities and the $\zeta$-integral is zero for every $r$, and we must have $I=0$. 
\end{proof}
\begin{thm}\label{thm:t}
If the configuration $(a_1,d_1,\dots, a_N, d_N)$ is in equilibrium, then
\[\lim_{\eps\to 0} A_\eps(a_1,d_1,\dots, a_N, d_N)=0.\]
\end{thm}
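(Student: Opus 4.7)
The plan is to reduce the integrand to a sum of pairwise cross-terms, each of which is handled by Lemma~\ref{lem:22}, using Lemma~\ref{lem:21} to eliminate the only term that could cause trouble.

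First I would use the identity $|w|^4 = |w^2|^2$ and square the Cauchy-type sum. Writing $S(z) = \sum_j \frac{d_j}{z-a_j}$, a direct expansion gives
\[
S(z)^2 = \sum_{j} \frac{d_j^2}{(z-a_j)^2} + \sum_{j\neq k} \frac{d_j d_k}{(z-a_j)(z-a_k)} = \sum_j \frac{d_j^2}{(z-a_j)^2} + G(z),
\]
where $G$ is the rational function from Lemma~\ref{lem:21}. By that lemma, the equilibrium assumption $\nabla W = 0$ is equivalent to $G \equiv 0$. Therefore
\[
|S(z)|^4 = \Bigl|\sum_j \frac{d_j^2}{(z-a_j)^2}\Bigr|^2 = \sum_j \frac{d_j^4}{|z-a_j|^4} + \sum_{j\neq k} \frac{d_j^2 d_k^2}{(\ol{z-a_j})^2 (z-a_k)^2},
\]
so the diagonal term cancels exactly against the subtracted sum in $A_\eps$, leaving
\[
A_\eps = \sum_{j\neq k} d_j^2 d_k^2 \int_{\R^2 \setminus \bigcup_\ell B_\eps(a_\ell)} \frac{1}{(\ol{z-a_j})^2 (z-a_k)^2} \, dx\, dy.
\]

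Next I would bring each pair integral into the form covered by Lemma~\ref{lem:22}. The domain $\R^2 \setminus \bigcup_\ell B_\eps(a_\ell)$ differs from the two-disk domain $\R^2 \setminus (B_\eps(a_j) \cup B_\eps(a_k))$ only by removal of the extra balls $B_\eps(a_\ell)$ with $\ell \notin \{j,k\}$. On each such ball the integrand is smooth and bounded uniformly in $\eps$ (for $\eps$ less than half the minimal vortex separation), so the missing contribution is $O(\eps^2)$. Lemma~\ref{lem:22} annihilates the integral over the two-disk domain, and summing over the finitely many ordered pairs gives $|A_\eps| = O(\eps^2)$, whence $A = 0$.

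The only step that needs genuine care is the comparison between the two domains of integration: one must confirm that the integrand $(\ol{z-a_j})^{-2}(z-a_k)^{-2}$ stays bounded on each auxiliary ball $B_\eps(a_\ell)$ with $\ell \neq j,k$, so the discarded mass is controlled by $\pi\eps^2$ times a constant depending only on the pairwise distances $|a_j - a_\ell|$, $|a_k - a_\ell|$. Once this bookkeeping is done, the result follows immediately from the two preceding lemmas without any appeal to symmetry of the equilibrium.
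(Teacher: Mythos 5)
Your proposal is correct and follows essentially the same route as the paper: expand $|S|^4=|S^2|^2$, kill the cross term $G\equiv 0$ via Lemma~\ref{lem:21}, reduce to the pairwise integrals $\int \ol{T_j}T_k$, and dispose of the extra balls $B_\eps(a_\ell)$, $\ell\notin\{j,k\}$, by the boundedness of the integrand there before applying Lemma~\ref{lem:22}. No substantive differences.
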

\begin{proof}
We begin by noting that
\[
\left|\sum_{j=1}^N \frac{d_j}{z-a_j}\right|^4 = \left| \left( \sum_{j=1}^N \frac{d_j}{z-a_j}\right)^2
\right|^2 = 
\left|\sum_{j=1}^N \frac{d_j^2}{(z-a_j)^2} + \sum_{j\neq k} \frac{d_jd_k}{(z-a_j)(z-a_k)}
 \right|^2 .
\]
If the vortices are in equilibrium then by Lemma \ref{lem:21}, the cross term is $G(z)\equiv 0$. We can thus write, using the 
abbreviation $T_j=\frac{d_j^2}{(z-a_j)^2}$,
\[
\left|\sum_{j=1}^N \frac{d_j}{z-a_j}\right|^4-\sum_{j=1}^N \frac{d_j^4}{|z-a_j|^4} 
= \left| \sum_{j=1}^N T_j\right|^2 - \sum_{j=1}^N |T_j|^2 = \sum_{j \neq k} \ol{T_j} T_k.
\]
Now for $\ell\notin\{j,k\}$, clearly $\int_{B_\eps(a_\ell)} \ol{T_j} T_k dx dy\to 0$ as $\e\to 0$ so 
\[
\lim_{\eps\to 0}\int_{\R^2\setminus \bigcup_{\ell=1}^N B_\eps(a_\ell)} \ol{T_j} T_k dxdy = 
\lim_{\eps\to 0}\int_{\R^2\setminus (B_\eps(a_j)\cup B_\eps(a_k))} \ol T_j T_k dxdy =0,
\]
where we have used Lemma \ref{lem:22}. It follows that 
\[
A=\lim_{\eps\to 0}\int_{\R^2\setminus \bigcup_{\ell=1}^N B_\eps(a_\ell) } \left(\left|\sum_{j=1}^N \frac{d_j}{z-a_j}\right|^4-\sum_{j=1}^N \frac{d_j^4}{|z-a_j|^4} 
 \right)dxdy =0.
\]
\end{proof}

\end{document}